\documentclass[graybox]{svmult}

\usepackage{type1cm}        
\usepackage{makeidx}         
\usepackage{graphicx}        
\usepackage{multicol}        
\usepackage[bottom]{footmisc}

\usepackage{newtxtext}       %
\usepackage[varvw]{newtxmath}       

\usepackage[colorlinks=true, allcolors=blue]{hyperref}
\usepackage{tikz}
\usepackage{wrapfig}

\newcommand{\norm}[1]{\left\lVert#1\right\rVert}

\makeindex             

\begin{document}

\title*{From PDEs constrained optimization to controllability problems via time domain decomposition}
\titlerunning{From PDEs constrained optimization to controllability problems}

\author{Pierre-Henri Cocquet\orcidID{0000-0003-0986-0800} and Liu-Di Lu\orcidID{0000-0003-3629-9879}}
\institute{
Pierre-Henri Cocquet \at Université de Pau et des Pays de l'Adour, France, \email{pierre-henri.cocquet@univ-pau.fr}
\and Liu-Di Lu \at Lund University, Sweden, \email{liudi.lu@math.lu.se}
}
%
%
\maketitle

\abstract*{This paper focuses on the application of time domain decomposition to solve partial differential equations constrained optimization problems and controllability problems. After clarifying the link between these two types of problems, we show that applying time domain decomposition to both problems leads to the same convergence behavior. Our numerical experiments also confirm these theoretical findings.}

\section{Introduction}\label{sec:1}
Although partial differential equations (PDEs) constrained optimization and controllability problems share similarities, they originated from different historical contexts. Controllability problems can be traced back more than 300 years to Johann Bernoulli’s brachystochrone problem; see~\cite{Sussmann1997} for a historical review. This problem initiated the development of optimal control theory, in which the system is often governed by ordinary differential equations (ODEs). PDE-constrained optimization, on the other hand, was developed in the early 1960s by the desire to extend control theory to systems governed by PDEs. This field has become an active research field following the monograph of Jacques-Louis Lions~\cite{Lions1971}.

In this paper, we focus on applying domain decomposition techniques to solve both classes of problems. To illustrate these two problems, we consider the linear heat equation as the governing constraint in both cases. In the case of a PDE-constrained optimization problem, we have a given target $\hat y$ and want to find optimal control strategies $u$ such that the corresponding solution $y$ of the heat equation is as close as possible to the given target. This leads to a constrained minimization problem:
\begin{equation}\label{eq:P1}
\begin{aligned}
\min_{y, u} \ \frac{\alpha}{2} &\norm{y -\hat y}^2_{L^2(Q)} 
+ \frac{\gamma}{2} \norm{y(T, \cdot) - \hat y(T, \cdot)}^2_{L^2(\Omega)} 
+ \frac{\nu}{2} \norm{u}^2_{L^2(Q)},
\\
\text{s.t.} \quad
&\partial_t y - \kappa \Delta y = u \ \text{ in } Q := (0, T)\times\Omega,
\\
&y = 0 \ \text{ on } \Sigma := (0, T)\times\partial\Omega
\quad
y = y_0 \ \text{ on } \Sigma_0 := \{0\}\times\Omega.
\end{aligned}
\tag{P1}
\end{equation}
Here, $\alpha \geq 0, \gamma \geq 0$ and $\nu > 0$ are given penalization parameters, $\kappa>0$ is the diffusion coefficient which is assumed to be constant in the spatial domain, $y_0$ is a given initial condition, $T>0$ is the final time and $\Omega\subset \mathbb{R}^n$ with $n = 1, 2, 3$ is a bounded open set with Lipschitz boundary. For our controllability problem, we want to find optimal control strategies $u$ with minimal $L^2$-norm that pilot the solution of the heat equation $y$ from a given initial condition $y_0$ to a zero final condition. This results in solving a similar minimization problem to~\eqref{eq:P1} as
\begin{equation}\label{eq:P2}
\begin{aligned}
&\min_{u} \ \frac{\nu}{2}\norm{u}^2_{L^2(Q)},
\\
\text{s.t.} \quad
\partial_t y - \kappa \Delta y = u \text{ in } Q, 
\
&y|_{\Sigma} = 0, 
\
y = y_0 \text{ on } \Sigma_0, 
\
y = 0 \text{ on } \Sigma_T:= \{T\}\times\Omega.
\end{aligned}
\tag{P2}
\end{equation}
The existence and uniqueness of a solution to~\eqref{eq:P1} are guaranteed by the strict-convexity of the linear quadratic optimization problem. The null-controllability of the classical heat equation is also well known, {\it e.g.}, see~\cite[Chapter 2.5]{coron2007control}. The existence of solution $u\in L^2(Q)$ to our null-controllability problem~\eqref{eq:P2} has been proven in~\cite[Theorem 6.1.2]{puel2019control} for $\Omega$ with $\mathcal{C}^{2+\delta}$ boundary, $\delta>0$. We emphasize that the results of~\cite[Theorem 6.1.2]{puel2019control} have been obtained considering a weak solution to the heat equation.
\begin{wrapfigure}{r}{0.3\textwidth}
\begin{tikzpicture}
\node at (0, 2) (a){\eqref{eq:P1}};
\node at (3, 2) (b){\eqref{eq:P2}};
\node at (3, 0) (c){$\rho_{\text{P2}}$};
\node at (0, 0) (d){$\rho_{\text{P1}}$};
\draw [thick, ->] (a) -- (b) node[midway, above] {$\alpha = 0$, $\gamma \to \infty$};
\draw [thick, ->] (b) -- (c) node[midway, left] {DD};
\draw [thick, ->] (d) -- (c) node[midway, below] {$\alpha = 0$, $\gamma \to \infty$};
\draw [thick, ->] (a) -- (d) node[midway, right] {DD};
\end{tikzpicture}
\end{wrapfigure}
The similarity between these two problems~\eqref{eq:P1} and~\eqref{eq:P2} already appears in their formulations. Indeed, choosing $\hat y(T, \cdot) = 0$ and setting $\alpha=0$, $\gamma\to+\infty$ in~\eqref{eq:P1} show that~\eqref{eq:P1} \textit{"converges"} toward~\eqref{eq:P2}. We make this convergence more rigorous in Section~\ref{sec:2}. On the other hand, if we apply separately time domain decomposition to the first order optimality system issued from each problem, we show in Section~\ref{sec:3} that under the same conditions, the convergence factor associated with~\eqref{eq:P1}, $\rho_{\text{P1}}$, coincide with the convergence factor associated with~\eqref{eq:P2}, $\rho_{\text{P2}}$. This relation is illustrated with the commuting diagram, and our numerical experiments in Section~\ref{sec:4} confirm these results.

\section{Relation between problems (P1) and (P2)}\label{sec:2}

First of all, we introduce $y_u$ as the unique weak solution to 
\begin{equation}\label{eq:direct_heat_PDE}
\partial_t y_{u} - \kappa \Delta y_{u} = u \ \text{ in } (0, T)\times\Omega,
\
y_u = 0 \ \text{ on } \Sigma,
\
y_u = y_0 \ \text{ on } \Sigma_0.
\end{equation}
A weak solution to~\eqref{eq:direct_heat_PDE} is defined as $y_u\in  L^2(0,T;H^1_0(\Omega))$, $\partial_t y_u\in L^2(0,T;H^{-1}(\Omega))$ and $y_u|_{\Sigma_0}=y_0$ in $L^2(\Omega)$ satisfying for all $\varphi\in H^1_0(\Omega)$,
\[\left<\partial_t y_u, \varphi\right>_{H^{-1},H^1_0} + \int_{\Omega}\kappa\nabla y_u\cdot\nabla\varphi\,dx =  \int_{\Omega}u\varphi\, dx, \ \text{a.e.}\ t\in]0,T[.\]
Aubin's Lemma (see~\cite[Lemma 6.2]{ern2004theory}) ensures that any weak solution $y_u\in \mathcal{C}([0,T],L^2(\Omega))$, hence the initial condition $y_0\in L^2(\Omega)$ is meaningful. From~\cite[Theorem 6.6]{ern2004theory}, there exists a unique weak solution to~\eqref{eq:direct_heat_PDE} which satisfies
\begin{equation}\label{eq:Bound_direct_heat}
\begin{aligned}
\norm{\partial_t y_u}_{L^2(0,T;H^{-1}(\Omega))}+\norm{y_u}_{L^\infty(0,T;L^2(\Omega))} &+ \norm{y_u}_{L^2(0,T;H^1_{0}(\Omega)} \\
&\leq C \left(\norm{u}_{L^2(Q)}+\norm{y_0}_{L^2(\Omega)}\right),
\end{aligned}
\end{equation}
for a suitable constant $C>0$. For the relation between~\eqref{eq:P1} and~\eqref{eq:P2}, we have the next result (see~\cite[Theorem 10]{ciaramella2015newton} for a similar result in a finite-dimensional case).

\begin{theorem}
Let $y_T:=0$ and $u^*$ be a solution of~\eqref{eq:P2}. Let $u_\varepsilon^*$ be the unique solution of~\eqref{eq:P1} with $\alpha=0$, $\gamma = 1/\varepsilon$ and $\hat{y}(T, \cdot)=y_T$ whose existence and uniqueness follows from the strict-convexity of \eqref{eq:P1}. Let $y_{\varepsilon}^* = y_{u_\varepsilon^*}$ then 
\begin{equation}\label{eq:Bound_P1_P2}
\norm{y_\varepsilon^*(T, \cdot)-y_T}_{L^2(\Omega)} \leq \sqrt{\nu\varepsilon}\norm{u^*}_{L^2(Q)}.
\end{equation}
In addition, there exist accumulation points of $(u_\varepsilon^*)_\varepsilon\subset L^2(Q)$ for the weak topology of $L^2(Q)$, and any of such accumulation points is an optimal solution to~\eqref{eq:P2}. If~\eqref{eq:P2} has the unique solution $u^*$, then the whole sequence $(u_\varepsilon^*)_\varepsilon$ converges toward $u^*$ strongly in $L^2(Q)$.
\end{theorem}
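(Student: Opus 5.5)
The approach is the standard penalty argument: compare the penalized functional with the constrained one at the feasible control $u^*$, then pass to the limit using weak compactness and the a priori bound~\eqref{eq:Bound_direct_heat}. Write
\[
J_\varepsilon(u):=\frac{1}{2\varepsilon}\norm{y_u(T,\cdot)-y_T}^2_{L^2(\Omega)}+\frac{\nu}{2}\norm{u}^2_{L^2(Q)},
\]
so that $u_\varepsilon^*$ minimizes $J_\varepsilon$. Since $u^*$ solves~\eqref{eq:P2}, we have $y_{u^*}(T,\cdot)=y_T=0$, and therefore $J_\varepsilon(u_\varepsilon^*)\le J_\varepsilon(u^*)=\frac{\nu}{2}\norm{u^*}^2_{L^2(Q)}$. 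Dropping the nonnegative control term on the left and multiplying by $2\varepsilon$ gives~\eqref{eq:Bound_P1_P2}. Dropping instead the penalty term gives $\norm{u_\varepsilon^*}_{L^2(Q)}\le\norm{u^*}_{L^2(Q)}$ for every $\varepsilon>0$.

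\textbf{Weak compactness and feasibility of limits.} The family $(u_\varepsilon^*)_\varepsilon$ is bounded in $L^2(Q)$, so there exist weak accumulation points. Let $u_{\varepsilon_k}^*\rightharpoonup \bar u$ with $\varepsilon_k\to 0$. By~\eqref{eq:Bound_direct_heat}, $y_{\varepsilon_k}^*$ is bounded in $L^2(0,T;H^1_0)$ with time derivative bounded in $L^2(0,T;H^{-1})$. Since $u\mapsto y_u-y_0$ is affine and continuous from $L^2(Q)$ into the space $W:=\{y\in L^2(0,T;H^1_0):\partial_t y\in L^2(0,T;H^{-1})\}$, the map $u\mapsto y_u$ is weakly continuous; equivalently, one can pass to the limit in the weak formulation directly. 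Hence $y_{\varepsilon_k}^*\rightharpoonup y_{\bar u}$ in $W$. The trace map $W\ni y\mapsto y(T)\in L^2(\Omega)$ is linear and continuous, because $W\hookrightarrow\mathcal C([0,T],L^2(\Omega))$, so $y_{\varepsilon_k}^*(T)\rightharpoonup y_{\bar u}(T)$ weakly in $L^2(\Omega)$. On the other hand,~\eqref{eq:Bound_P1_P2} gives $y_{\varepsilon_k}^*(T)\to 0$ strongly. Therefore $y_{\bar u}(T)=0$, i.e. $\bar u$ is admissible for~\eqref{eq:P2}.

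\textbf{Optimality of limits.} By weak lower semicontinuity of the norm,
\[
\norm{\bar u}\le\liminf_k\norm{u_{\varepsilon_k}^*}\le\norm{u^*}=\min\eqref{eq:P2}.
\]
Since $\bar u$ is admissible, it is optimal for~\eqref{eq:P2}.

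\textbf{Strong convergence under uniqueness.} If~\eqref{eq:P2} has the unique solution $u^*$, every weak accumulation point of the bounded family equals $u^*$. A subsequence-of-subsequence argument then gives $u_\varepsilon^*\rightharpoonup u^*$ for the whole family. Moreover,
\[
\limsup_{\varepsilon\to0}\norm{u_\varepsilon^*}\le\norm{u^*}\le\liminf_{\varepsilon\to0}\norm{u_\varepsilon^*},
\]
so the norms converge. In the Hilbert space $L^2(Q)$, weak convergence together with convergence of norms yields strong convergence, via
\[
\norm{u_\varepsilon^*-u^*}^2=\norm{u_\varepsilon^*}^2-2(u_\varepsilon^*,u^*)+\norm{u^*}^2\to 0.
\]

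The only genuinely delicate step is identifying the weak limit of the final traces $y_{\varepsilon_k}^*(T)$ with $y_{\bar u}(T)$, which requires the continuity of $W\to\mathcal C([0,T],L^2)$ from Aubin's lemma as recalled in the paper. Everything else is elementary.
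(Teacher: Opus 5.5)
Your proposal is correct and follows the paper's argument: compare against $u^*$, extract weak limits, pass to the limit in the state equation, use weak lower semicontinuity, and use norm convergence to get strong convergence. You also make the trace identification and the $\limsup$ bound on the norms more explicit than the paper does. One small slip: it is $u\mapsto y_u$ itself that is affine and continuous from $L^2(Q)$ into $W$, not $u\mapsto y_u-y_0$, since $y_0\in L^2(\Omega)$ need not belong to $W$.
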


\begin{proof}
The optimality of $u_\varepsilon^*$ ensures that $\forall u\in L^2(Q)$, one has
\[\frac{1}{2\varepsilon} \norm{y_\varepsilon^*(T, \cdot) - y_T}^2_{L^2(\Omega)} 
+ \frac{\nu}{2} \norm{u_\varepsilon^*}^2_{L^2(Q)} \leq 
\frac{1}{2\varepsilon} \norm{y_u(T, \cdot) - y_T}^2_{L^2(\Omega)} 
+ \frac{\nu}{2} \norm{u}^2_{L^2(Q)}.\]
Now taking $u=u^*$ and using that $y_{u^*}$ satisfies $y_{u^*}(T,\cdot)=y_T$ give 
\[\frac{1}{2\varepsilon} \norm{y_\varepsilon^*(T, \cdot) - y_T}^2_{L^2(\Omega)} 
+ \frac{\nu}{2} \norm{u_\varepsilon^*}^2_{L^2(Q)} \leq 
\frac{\nu}{2} \norm{u^*}^2_{L^2(Q)},\]
from which the aforementioned inequality follows. 
We also get that the sequence $\left( u_\varepsilon^*\right)_\varepsilon$ is uniformly bounded in $L^2(Q)$ and thus admits a subsequence that converges weakly in $L^2(Q)$ toward some $\widetilde{u}$. From~\eqref{eq:Bound_direct_heat}, the sequence $\left( y_{\varepsilon}^*\right)_{\varepsilon}$ is uniformly bounded in some reflexive Banach space $\mathcal{H}$ and thus admits a subsequence that weakly converges toward some $\widetilde{y}\in \mathcal{H}$. Taking the limit $\varepsilon\to 0$ in the weak formulation of~\eqref{eq:direct_heat_PDE} and using that $u_\varepsilon^*\to\widetilde{u}$, we obtain (up to a subsequence) that $y_\varepsilon^*\to \widetilde{y} = y_{\widetilde{u}}$. Owing to~\eqref{eq:Bound_P1_P2}, $y_{\varepsilon}^*(T, \cdot)\to y_T$ strongly in $L^2(\Omega)$ so that $y_{\widetilde{u}}(T,\cdot) = y_T$. The weak lower semicontinuity of the $L^2-$norm finally yields 
\[\norm{\widetilde{u}}_{L^2(Q)} \leq \liminf_{\varepsilon\to 0} \norm{u_\varepsilon^*}_{L^2(Q)}\leq \norm{u^*}_{L^2(Q)},\] 
and the optimality of $u^*$ ensures that $\norm{\widetilde{u}}_{L^2(Q)} =\norm{u^*}_{L^2(Q)}$. Hence, $\widetilde{u}$ is an optimal solution to~\eqref{eq:P2}. Any accumulation point (for the weak topology of $L^2(Q)$) of $\left(u_\varepsilon \right)_\varepsilon$ is then an optimal solution to~\eqref{eq:P2}. If~\eqref{eq:P2} has a unique optimal solution $u^*$, then $\widetilde{u}=u^*$ and the whole sequence $(u_\varepsilon)_\varepsilon$ converges toward $u^*$. The uniqueness of the limit gives that 
\[\liminf_{\varepsilon\to 0} \norm{u_\varepsilon^*}_{L^2(Q)}= \lim_{\varepsilon\to 0}\norm{u_\varepsilon^*}_{L^2(Q)} =  \norm{u^*}_{L^2(Q)},\] 
and then $u^*_\varepsilon\to u^*$ strongly.
\end{proof}

\section{Time domain decomposition}\label{sec:3}

A classical way to treat these two problems~\eqref{eq:P1} and~\eqref{eq:P2} is to derive the first-order optimality system by applying the Lagrange multiplier approach. We can then obtain the reduced optimality system for~\eqref{eq:P1},
\begin{equation}\label{eq:S1}
\begin{aligned}
\partial_t y - \kappa \Delta y &= \nu^{-1}\lambda &&\text{ in } Q, 
&&y = y_0 && \text{ on } \Sigma_0, \\
\partial_t \lambda + \kappa \Delta \lambda &= \alpha (y - \hat y) && \text{ in } Q,
&&\lambda = -\gamma (y - \hat y) && \text{ on } \Sigma_T,
\end{aligned}
\tag{S1}
\end{equation}
and the reduced optimality system for~\eqref{eq:P2},
\begin{equation}\label{eq:S2}
\begin{aligned}
\partial_t y - \kappa \Delta y &= \nu^{-1}\lambda && \text{ in } Q, 
\quad
y = y_0 \ \text{ on } \Sigma_0,
\quad
y = 0 \ \text{ on } \Sigma_T, \\
\partial_t \lambda + \kappa \Delta \lambda &= 0 && \text{ in } Q, 
\end{aligned}
\tag{S2}
\end{equation}
where $\lambda$ is the adjoint state. We refer to~\cite[Chapter 3 Section 3.1]{Lions1971} for further details on the derivation of the reduced optimality system. We used the optimality condition $\nu u - \lambda = 0$ in $Q$ in both cases to eliminate the control variable $u$ in the state equation of $y$, and we omit the Dirichlet boundary conditions in~\eqref{eq:S1} and~\eqref{eq:S2}. We also assume that both problems~\eqref{eq:P1}, \eqref{eq:P2} are solvable. We are now interested in applying domain decomposition methods to solve their associated optimality systems~\eqref{eq:S1} and~\eqref{eq:S2}.

The alternating Schwarz algorithm, invented by H.A. Schwarz in 1869, is a way to solve a large problem by decomposing the original domain into smaller subdomains and then solving the problem on each subdomain one after another, while exchanging information between interfaces until the solution converges. We refer to~\cite{Gander2008} for a historical review. It is well known that the alternating Schwarz algorithm fails to converge when applied to non-overlapping spatial subdomains. This is due to the repeated passing of identical "Dirichlet data" from one subdomain to the other.  However, it has been shown in~\cite{Gander2025} that there exist several variants of the alternating Schwarz algorithm in the time decomposition framework, and some of them may converge even without overlap. When decomposing in time and applying non-overlapping alternating Schwarz type algorithms to the reduced optimality system, a Dirichlet transmission condition at the time interface can be equivalently rewritten as a Robin type transmission condition, e.g., see~\cite[Equation 4]{Gander2025}. This avoids passing identical "Dirichlet data" between subdomains and thus leads to the convergence. For this reason, we are interested here in studying the non-overlapping time-decomposed alternating Schwarz algorithms to solve~\eqref{eq:S1} and~\eqref{eq:S2}. We decompose the space-time domain $Q$ into two non-overlapping subdomains: $Q_1 := (0,\Gamma)\times\Omega$ and $Q_2 := (\Gamma, T)\times\Omega$, where $\Gamma\in (0, T)$ represents the interface.

The system~\eqref{eq:S1} has a forward backward structure, where the state variable $y$ propagates forward with an initial condition $y_0$, and the adjoint variable $\lambda$ propagates backward with a final condition $-\gamma(y(T, \cdot) - \hat y(T, \cdot))$. To retain the same forward-backward structure in subdomain $Q_1$, one should impose a "final" condition at the interface $\Sigma_{\Gamma} := \{\Gamma\}\times\Omega$ for the adjoint variable $\lambda$. Similarly, one imposes an "initial" condition at the interface $\Sigma_{\Gamma}$ for the state variable $y$ to have the same forward-backward structure in $Q_2$. We denote by AS1 this first type of alternating Schwarz algorithm which imposes $\lambda$ at $\Sigma_{\Gamma}$ in $Q_1$ and $y$ at $\Sigma_{\Gamma}$ in $Q_2$. Then, for the iteration index $\ell = 1, 2, \ldots$, the AS1 applied to solve~\eqref{eq:S1} reads
\begin{equation}\label{eq:S1AS1}
\begin{aligned}
\partial_t y_1^{\ell} - \kappa \Delta y_1^{\ell} &= \nu^{-1}\lambda_1^{\ell}&& \text{ in } Q_1, \\
y_1^{\ell} &= y_0 && \text{ on } \Sigma_0 , \\
\partial_t \lambda_1^{\ell} + \kappa \Delta \lambda_1^{\ell} &= \alpha (y_1^{\ell} - \hat y) && \text{ in } Q_1, \\
\lambda_1^{\ell} &= \lambda_2^{\ell-1} && \text{ on } \Sigma_{\Gamma},
\end{aligned}
\
\begin{aligned}
\partial_t y_2^{\ell} - \kappa \Delta y_2^{\ell} &= \nu^{-1}\lambda_2^{\ell} && \text{ in } Q_2, \\
y_2^{\ell} &= y_1^{\ell} && \text{ on } \Sigma_{\Gamma}, \\
\partial_t \lambda_2^{\ell} + \kappa \Delta \lambda_2^{\ell} &= \alpha (y_2^{\ell} - \hat y) && \text{ in } Q_2, \\
\lambda_2^{\ell} + \gamma y_2^{\ell} &= \gamma \hat y && \text{ on } \Sigma_T.
\end{aligned}
\tag{S1-AS1}
\end{equation}
For comparison, we also apply AS1 to solve~\eqref{eq:S2}, which leads to the algorithm,
\begin{equation}\label{eq:S2AS1}
\begin{aligned}
\partial_t y_1^{\ell} - \kappa \Delta y_1^{\ell} &= \nu^{-1}\lambda_1^{\ell} && \text{ in } Q_1, \\
y_1^{\ell} &= y_0 && \text{ on } \Sigma_0 , \\
\partial_t \lambda_1^{\ell} + \kappa \Delta \lambda_1^{\ell} &= 0 && \text{ in } Q_1, \\
\lambda_1^{\ell} &= \lambda_2^{\ell-1} && \text{ on } \Sigma_{\Gamma},
\end{aligned}
\qquad
\begin{aligned}
\partial_t y_2^{\ell} - \kappa \Delta y_2^{\ell} &= \nu^{-1}\lambda_2^{\ell} && \text{ in } Q_2, \\
y_2^{\ell} &= y_1^{\ell} && \text{ on } \Sigma_{\Gamma}, \\
y_2^{\ell} &= 0  && \text{ on } \Sigma_T\\
\partial_t \lambda_2^{\ell} + \kappa \Delta \lambda_2^{\ell} &= 0 && \text{ in } Q_2.\\
\end{aligned}
\tag{S2-AS1}
\end{equation}
We observe that in the first subdomain $Q_1$ on the left of the system~\eqref{eq:S2AS1}, it does not follow the same structure as in~\eqref{eq:S2}. Indeed, the system~\eqref{eq:S2} is also a forward-backward system in which the initial and final conditions are all given for the state variable $y$. Hence, to retain the same forward backward structure of~\eqref{eq:S2} when applying the alternating Schwarz algorithm, one should impose a "final" condition at the interface $\Sigma_{\Gamma}$ for the state variable $y$ in $Q_1$ and an "initial" condition at the interface $\Sigma_{\Gamma}$ also for $y$ in $Q_2$. This then leads to a second type of alternating Schwarz algorithm denoted by AS2, which imposes $y$ at $\Sigma_{\Gamma}$ in both $Q_1$ and $Q_2$. The AS2 applied to solve~\eqref{eq:S2} then reads
\begin{equation}\label{eq:S2AS2}
\begin{aligned}
\partial_t y_1^{\ell} - \kappa \Delta y_1^{\ell} &= \nu^{-1}\lambda_1^{\ell} && \text{ in } Q_1, \\
y_1^{\ell} &= y_0 && \text{ on } \Sigma_0 , \\
y_1^{\ell} &= y_2^{\ell-1} && \text{ on } \Sigma_{\Gamma}, \\
\partial_t \lambda_1^{\ell} + \kappa \Delta \lambda_1^{\ell} &= 0 && \text{ in } Q_1, 
\end{aligned}
\qquad
\begin{aligned}
\partial_t y_2^{\ell} - \kappa \Delta y_2^{\ell} &= \nu^{-1}\lambda_2^{\ell} && \text{ in } Q_2, \\
y_2^{\ell} &= y_1^{\ell} && \text{ on } \Sigma_{\Gamma}, \\
y_2^{\ell} &= 0  && \text{ on } \Sigma_T\\
\partial_t \lambda_2^{\ell} + \kappa \Delta \lambda_2^{\ell} &= 0 && \text{ in } Q_2,\\
\end{aligned}
\tag{S2-AS2}
\end{equation}
which now follows the same structure as~\eqref{eq:S2} in both subdomains. For comparison, we also apply AS2 to solve~\eqref{eq:S1}, which leads to the algorithm,
 \begin{equation}\label{eq:S1AS2}
\begin{aligned}
\partial_t y_1^{\ell} - \kappa \Delta y_1^{\ell} &= \nu^{-1}\lambda_1^{\ell} && \text{ in } Q_1, \\
y_1^{\ell} &= y_0 && \text{ on } \Sigma_0 , \\
y_1^{\ell} &= y_2^{\ell-1} && \text{ on } \Sigma_{\Gamma}, \\
\partial_t \lambda_1^{\ell} + \kappa \Delta \lambda_1^{\ell} &= \alpha (y_1^{\ell} - \hat y) && \text{ in } Q_1, \\
\end{aligned}
\
\begin{aligned}
\partial_t y_2^{\ell} - \kappa \Delta y_2^{\ell} &= \nu^{-1}\lambda_2^{\ell} && \text{ in } Q_2, \\
y_2^{\ell} &= y_1^{\ell} && \text{ on } \Sigma_{\Gamma}, \\
\partial_t \lambda_2^{\ell} + \kappa \Delta \lambda_2^{\ell} &= \alpha (y_2^{\ell} - \hat y) && \text{ in } Q_2, \\
\lambda_2^{\ell} + \gamma y_2^{\ell} &= \gamma \hat y && \text{ on } \Sigma_T.
\end{aligned}
\tag{S1-AS2}
\end{equation}

To get better insights into these four algorithms~\eqref{eq:S1AS1}-\eqref{eq:S1AS2}, we need to analyze their convergence. Since we focus here on the time domain decomposition, we can then apply a discretization in space and replace the spatial operator $-\Delta$ by a matrix $A\in\mathbb{R}^{N \times N}$, e.g., using the centered finite difference discretization in space. As we only have Dirichlet boundary conditions, this finite difference matrix $A$ is symmetric and can be diagonalized with $PP^T = I_N$, $PAP^T = D := \text{diag}(d_1, \ldots, d_N)$, and $d_i > 0$, $i = 1, \ldots, N$. For instance, the eigenvalues in the one dimensional case ($n=1$) are given by $d_i = \frac{4}{h^2}\sin^2(\frac{\pi i}{2(N+1)})$, $i = 1, \ldots, N$, with $h:=L/(N+1)$ the mesh size and $L$ the length of the spatial domain. This diagonalization leads to $N$ independent $2 \times 2$ systems of first-order ODEs in time.

For each algorithm, solving the ODE system explicitly for each eigenvalue $d_i$ first in $Q_1$ then in $Q_2$ leads to a recursive relation between two iterates $\boldsymbol{y}_{2, i}^{\ell} = \rho(d_i)\boldsymbol{y}_{2, i}^{\ell-1}$, where $\boldsymbol{y}_{2, i}^{\ell}(t) \approx y_2^{\ell}(t, x_i)$ is an approximation after the discretization and diagonalization in space, and $\rho(d_i)$ is the contraction factor associated with the eigenvalue $d_i$. Each algorithm in~\eqref{eq:S1AS1}-\eqref{eq:S2AS1} has an associated contraction factor $\rho(d_i)$ for each eigenvalue $d_i$, as will be shown in the sequel. The absolute value of the factor $|\rho(d_i)|$ measures the contraction of each algorithm as a function of $d_i$, and the algorithm converges if this absolute value is smaller than one for all eigenvalues, {\it i.e.}, $\max_{d_i} |\rho(d_i)| < 1$. More details of the diagonalization and computations to obtain the contraction factor can be found in~\cite{Gander2025}.

The contraction factor associated with~\eqref{eq:S1AS1} for each $d_i$ is given by
\begin{equation}\label{eq:rhoS1AS1}
\rho^{\text{AS1}}_{\text{S1}}(d_i) = 
\frac{\alpha + \gamma(\sigma_i\coth\left(\sigma_i (T-\Gamma)\right) - \kappa d_i)}
{\nu \big(\sigma_i\coth(\sigma_i \Gamma) + \kappa d_i\big)
\big(\sigma_i\coth\left(\sigma_i (T-\Gamma)\right) + \kappa d_i + \gamma\nu^{-1}\big)},
\end{equation}
with $\sigma_i := \sqrt{\kappa^2d_i^2 + \nu^{-1}\alpha}$. Similarly, the contraction factor of~\eqref{eq:S2AS1} for each eigenvalue $d_i$ is given by
\begin{equation}\label{eq:rhoS2AS1}
\rho^{\text{AS1}}_{\text{S2}}(d_i) = \frac{\coth\left(\kappa d_i (T-\Gamma)\right) - 1}{\coth(\kappa d_i \Gamma) + 1}.
\end{equation}
Setting $\alpha = 0$ in~\eqref{eq:rhoS1AS1}, we have
\begin{equation}\label{eq:rhoS1AS1alpha0}
\rho^{\text{AS1}}_{\text{S1}}(d_i)\Big|_{\alpha=0} =  \frac{\coth\left(\kappa d_i (T-\Gamma)\right) - 1}{\coth(\kappa d_i \Gamma) + 1}
\frac{\gamma}{\nu\kappa d_i\coth\left(\kappa d_i (T-\Gamma)\right) + \nu\kappa d_i + \gamma}.
\end{equation}
Then, when $\gamma \to \infty$, the second fraction goes to 1, thus 
\[\lim_{\gamma \to \infty} \rho^{\text{AS1}}_{\text{S1}}(d_i)\Big|_{\alpha=0}  = \rho^{\text{AS1}}_{\text{S2}}(d_i),\] 
meaning that these two contraction factors become the same in the limit case. Since this limit holds for each eigenvalue, we have the next result.

\begin{theorem}
If $\alpha=0$ in~\eqref{eq:P1}, then the convergence behavior of two algorithms~\eqref{eq:S1AS1} and~\eqref{eq:S2AS1} becomes the same when $\gamma \to \infty$.
\end{theorem}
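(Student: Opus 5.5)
We prove the statement by working at the level of the contraction factors, since after spatial discretization and diagonalization $PAP^T=D$ both algorithms reduce to $N$ decoupled $2\times 2$ ODE systems in time. For each mode $i$ the iterates satisfy $\boldsymbol{y}_{2,i}^{\ell}=\rho(d_i)\boldsymbol{y}_{2,i}^{\ell-1}$. Hence the error after $\ell$ iterations in mode $i$ is $\rho(d_i)^\ell$ times the initial error. The convergence behavior of each algorithm is therefore entirely encoded by the finite family $\{\rho(d_i)\}_{i=1}^N$, and its asymptotic rate by $\max_i|\rho(d_i)|$. So it suffices to show that $\rho^{\text{AS1}}_{\text{S1}}(d_i)|_{\alpha=0}\to\rho^{\text{AS1}}_{\text{S2}}(d_i)$ for every $i$ as $\gamma\to\infty$, and that the maximal factor converges as well.

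The first step is to specialize \eqref{eq:rhoS1AS1} to $\alpha=0$. Then $\sigma_i=\kappa d_i>0$ because $d_i>0$ and $\kappa>0$. Factoring $\kappa d_i$ gives
\[\text{numerator} = \gamma\kappa d_i\bigl(\coth(\kappa d_i(T-\Gamma))-1\bigr), \qquad \nu\bigl(\sigma_i\coth(\sigma_i\Gamma)+\kappa d_i\bigr)=\nu\kappa d_i\bigl(\coth(\kappa d_i\Gamma)+1\bigr).\]
Cancelling $\kappa d_i$ and writing the remaining factor as $\gamma/(\nu\kappa d_i\coth(\kappa d_i(T-\Gamma))+\nu\kappa d_i+\gamma)$ yields the factorized form \eqref{eq:rhoS1AS1alpha0}. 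The first fraction there is exactly \eqref{eq:rhoS2AS1}.

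The second step is the limit. Set $c_i:=\nu\kappa d_i(\coth(\kappa d_i(T-\Gamma))+1)$, which is positive and independent of $\gamma$. The second factor equals $\gamma/(\gamma+c_i)=1-c_i/(\gamma+c_i)\to1$, so the pointwise limit follows. I will also record the quantitative estimate
\[\bigl|\rho^{\text{AS1}}_{\text{S1}}(d_i)|_{\alpha=0}-\rho^{\text{AS1}}_{\text{S2}}(d_i)\bigr| = |\rho^{\text{AS1}}_{\text{S2}}(d_i)|\,\frac{c_i}{\gamma+c_i}.\]
This shows in addition that $|\rho^{\text{AS1}}_{\text{S1}}|\le|\rho^{\text{AS1}}_{\text{S2}}|$ for every $\gamma\ge0$, with monotone increase in $\gamma$.

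The last step, which is the only point needing care, is passing from modewise convergence to "the same convergence behavior." Since there are finitely many eigenvalues, the convergence is uniform, and $\max_i|\rho^{\text{AS1}}_{\text{S1}}(d_i)|\to\max_i|\rho^{\text{AS1}}_{\text{S2}}(d_i)|$ because the max of finitely many continuous functions is continuous. The error after $\ell$ iterations converges as well, for each fixed $\ell$, since $\rho^\ell$ is continuous in $\rho$. If one wants a statement uniform in the mesh, i.e.\ for $d\in(0,\infty)$, I would additionally bound $c(d)/(\gamma+c(d))$ together with $|\rho^{\text{AS1}}_{\text{S2}}(d)|$. Note that $\rho^{\text{AS1}}_{\text{S2}}(d)\sim e^{-2\kappa d(T-\Gamma)}$ decays exponentially while $c(d)$ grows only linearly, so the product tends to $0$ uniformly in $d$ as $\gamma\to\infty$. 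I would treat this as an optional strengthening; the discrete statement is all the theorem requires.
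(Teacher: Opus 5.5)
Your proof is correct and follows the paper's own argument. You set $\alpha=0$ so that $\sigma_i=\kappa d_i$, factor \eqref{eq:rhoS1AS1} into \eqref{eq:rhoS2AS1} times $\gamma/(\gamma+c_i)$, and let $\gamma\to\infty$ eigenvalue by eigenvalue; your quantitative bound, the passage to $\max_i$ over finitely many modes, and the optional uniform-in-$d$ remark are valid refinements that the paper leaves implicit.
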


In terms of two problems~\eqref{eq:P1} and~\eqref{eq:P2}, when $\alpha = 0$, or more generally, taking a relatively small value of $\alpha$ w.r.t. $\gamma$ and $\nu$ in~\eqref{eq:P1}, it means that the discrepancy between the solution $y$ of the heat equation and the given target $\hat y$ is less important. In this context, taking large values of $\gamma$ emphasizes that one searches for optimal control strategies in~\eqref{eq:P1} that can pilot the solution of the heat equation to the target only at the final time $T$. This is equivalent to the idea of the controllability problem~\eqref{eq:P2}. In this case, it is relevant that we also obtain similar convergence behavior when applying the same type of alternating Schwarz algorithm to solve the corresponding optimality system. In particular, their convergence does not depend on the choice of the target function $\hat y$ in~\eqref{eq:P1}, nor on the final condition $y_T$ in~\eqref{eq:P2}.

In terms of two algorithms~\eqref{eq:S1AS1} and~\eqref{eq:S2AS1}, it has been shown in~\cite[Theorem 1]{Gander2025} that the algorithm~\eqref{eq:S1AS1} always converges for $\Gamma\leq T/2$ when $\alpha=1$. This still holds for the algorithm~\eqref{eq:S2AS1}. Indeed, using two properties of the hyperbolic cotangent: $\coth(x) > 1$ and decreases for all $x>0$, we find in~\eqref{eq:rhoS2AS1} that $\coth(\kappa d_i (T-\Gamma)) \leq \coth(\kappa d_i \Gamma)$. Thus, $\rho^{\text{AS1}}_{\text{S2}}(d_i) < 1$ for each $d_i$, and the algorithm~\eqref{eq:S2AS1} converges for $\Gamma\leq T/2$. Given the simple analytical form of~\eqref{eq:rhoS2AS1}, we can be even more precise about the choice of $\Gamma$.

\begin{theorem}\label{thm:Gamma}
Assume that $\Gamma < \frac{1}{2\kappa d_{\min}}\ln{\frac{1}{2}(1+e^{2\kappa d_{\min}T})}$ where $d_{\min}$ is the smallest eigenvalue of $A$. Then $\max_{d_i}|\rho^{\text{AS1}}_{\text{S2}}(d_i)|<1$ hence Algorithm~\eqref{eq:S2AS1} converges.
\end{theorem}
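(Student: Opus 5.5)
We work directly with the closed form \eqref{eq:rhoS2AS1}. For $x>0$ we have $\coth x>1$, so numerator and denominator are both positive and $\rho^{\text{AS1}}_{\text{S2}}(d_i)>0$; hence $|\rho^{\text{AS1}}_{\text{S2}}(d_i)|=\rho^{\text{AS1}}_{\text{S2}}(d_i)$. Write $\rho(d)=\frac{\coth(\kappa d(T-\Gamma))-1}{\coth(\kappa d\Gamma)+1}$ for $d>0$ and use $\coth x-1=\frac{2}{e^{2x}-1}$, $\coth x+1=\frac{2e^{2x}}{e^{2x}-1}$. This gives
\[
\rho(d)=\frac{e^{2\kappa d\Gamma}-1}{e^{2\kappa d\Gamma}\bigl(e^{2\kappa d(T-\Gamma)}-1\bigr)}=\frac{1-e^{-2\kappa d\Gamma}}{e^{2\kappa d(T-\Gamma)}-1}.
\]

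The plan is in two steps.

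\textbf{Step 1: reduce to the smallest eigenvalue.} In the last expression, the numerator $1-e^{-2\kappa d\Gamma}$ is bounded by $1$, but it increases with $d$, so a direct comparison term by term is not enough. Instead we show $\rho$ is decreasing in $d$. The numerator equals $\tanh(\kappa d\Gamma)\,(1+e^{-2\kappa d\Gamma})$, and a cleaner route is to rewrite
\[
\rho(d)=\frac{e^{2\kappa d\Gamma}-1}{e^{2\kappa dT}-e^{2\kappa d\Gamma}},
\]
and set $a=e^{2\kappa d\Gamma}$, $b=e^{2\kappa dT}=a^{T/\Gamma}$. Then $\rho=\frac{a-1}{a^{p}-a}$ with $p=T/\Gamma>1$ and $a>1$ increasing in $d$. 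Writing
\[
\frac{1}{\rho}=\frac{a(a^{p-1}-1)}{a-1},
\]
the factor $a$ is increasing in $a$. The factor $\frac{a^{p-1}-1}{a-1}$ is also increasing for $p\ge 2$, by convexity of $a\mapsto a^{p-1}$. For $1<p<2$ the second factor is decreasing, so we use the form $\frac{a^{p}-a}{a-1}$ instead. Its derivative has the sign of $(p-1)a^{p}-pa^{p-1}+1$, which vanishes at $a=1$ and has derivative $p(p-1)a^{p-2}(a-1)>0$. Hence it is positive for $a>1$, so $1/\rho$ is increasing for every $p>1$. Thus $\rho$ is decreasing in $d$, and
\[
\max_{d_i}\rho(d_i)=\rho(d_{\min}).
\]

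\textbf{Step 2: solve $\rho(d_{\min})<1$.} With $d=d_{\min}$, the condition is equivalent, since both denominators are positive, to $e^{2\kappa d\Gamma}-1<e^{2\kappa dT}-e^{2\kappa d\Gamma}$, that is,
\[
e^{2\kappa d\Gamma}<\tfrac12\bigl(1+e^{2\kappa dT}\bigr).
\]
Taking logarithms gives exactly the assumed bound on $\Gamma$. Since $\frac12(1+e^{2\kappa dT})<e^{2\kappa dT}$, this bound is also less than $T$, so $\Gamma\in(0,T)$ stays admissible. Combined with Step 1, this yields $\max_{d_i}|\rho^{\text{AS1}}_{\text{S2}}(d_i)|<1$, and convergence of \eqref{eq:S2AS1} follows from the contraction-factor argument described before the theorem.

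The main obstacle is the monotonicity in Step 1, in particular showing that $1/\rho$ is increasing for all $p>1$ and not only for $p\ge 2$ (that is, $\Gamma\le T/2$). The derivative computation above is meant to handle this uniformly.
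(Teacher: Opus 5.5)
Your proof is correct, but you reduce to the smallest eigenvalue differently from the paper. The paper never studies $\rho$ as a function of $d$. It solves $\rho(d_i)<1$ for each eigenvalue separately, which gives the per-mode threshold $\Gamma/T<f(\kappa d_i T)$ with $f(a)=\frac{1}{2a}\ln\frac12(1+e^{2a})$; this is the same algebra as your Step 2. It then uses that $f$ is increasing, so the constraint at $d_{\min}$ is the binding one.

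You instead prove that $\rho$ itself is decreasing in $d$ for every fixed $\Gamma\in(0,T)$, and only solve the inequality at $d_{\min}$. Your statement is stronger. It gives $\max_i\rho(d_i)=\rho(d_{\min})$, which is the actual contraction rate. It also justifies the remark after the theorem that the factor decreases in $d_i$, which the paper asserts without proof.

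The paper's route needs only monotonicity of a one-variable threshold function. The paper also asserts this without proof, but it follows quickly from convexity of $s\mapsto\ln\frac{1+e^s}{2}$ together with its vanishing at $s=0$. That route also displays directly which modes contract for a given $\Gamma$.

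A minor point: your case split at $p=2$ is unnecessary. The sign argument for $(p-1)a^p-pa^{p-1}+1$ already works for every $p>1$.
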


\begin{proof}
It suffices to study the contraction factor~\eqref{eq:rhoS2AS1} for each $d_i$. We can first remove the absolute value in~\eqref{eq:rhoS2AS1}, since $\coth(x) > 1$ for all $x>0$. Then $\rho^{\text{AS1}}_{\text{S2}}(d_i)<1$ is equivalent to $\coth(\kappa d_i (T-\Gamma)) - \coth(\kappa d_i \Gamma)  < 2$. Note that for $x\in(0, 1)$, the equation $\coth(a(1-x)) - \coth(ax) < 2$ leads to the solution $x < f(a)$ with 
\[f(a):=\frac{1}{2a}\ln{\frac{1}{2}(1+e^{2a})}.\] 
In particular, the function $f$ is strictly positive and increasing for $a>0$. Substituting $a$ by $\kappa d_iT$ and $x$ by $\Gamma/T$ in the inequality $x < f(a)$, we then obtain that 
\[\frac{\Gamma}T < \frac{1}{2\kappa d_iT}\ln{\frac{1}{2}(1+e^{2\kappa d_iT})}.\] 
This inequality needs to be satisfied for all eigenvalues $d_i$, $i = 1, \ldots, N$. Hence, we obtain $\Gamma < \frac{1}{2\kappa d_{\min}}\ln{\frac{1}{2}(1+e^{2\kappa d_{\min}T})}$.
\end{proof}

In the one dimensional case with only Dirichlet boundary conditions, the smallest eigenvalue is 
\[d_{\min} = \frac{4}{h^2}\sin^2(\frac{\pi}{2(N+1)})= \frac{\pi^2}{L^2} - h^2\frac{\pi^4}{12 L^4} + O(h^4).\]
Note that for the function $f$, $\lim_{a\to 0} f(a) = 1/2$ and $\lim_{a\to \infty} f(a) = 1$. We thus have the bound 
\[\frac{T}{2} < \frac{T}{2\kappa d_{\min}T}\ln{\frac{1}{2}(1+e^{2\kappa d_{\min}T})} < T,\] 
which also holds in higher dimensions, with eigenvalues $d_i\in(0, \infty)$, $i = 1, \ldots, N$. On the other hand, the contraction factor~\eqref{eq:rhoS2AS1} decreases as a function of $d_i$, which is bounded above by $\rho^{\text{AS1}}_{\text{S2}}(d_{\min})$. The convergence behavior then only depends on the smallest eigenvalue.

Following the same approach, we can solve the ODE systems associated with~\eqref{eq:S2AS2} and~\eqref{eq:S1AS2} to find their contraction factors. For each $d_i$, these two contraction factors are
\[\rho^{\text{AS2}}_{\text{S1}}(d_i) = 1,\quad
\rho^{\text{AS2}}_{\text{S2}}(d_i) = 1.\] 
This is not surprising because of the transmission conditions used in~\eqref{eq:S2AS2} and~\eqref{eq:S1AS2}. For both algorithms, we have $y_1^{\ell} = y_2^{\ell-1}$ on $\Sigma_{\Gamma}$ in $Q_1$, and $y_2^{\ell} = y_1^{\ell}$ on $\Sigma_{\Gamma}$ in $Q_2$. For any given initial guess $y_2^0$, the same information is constantly passed back and forth between two subdomains, solving the same problem at each iteration. Thus, both algorithms stagnate. This does not depend on the eigenvalues or on any parameter values, since these transmission conditions are imposed to decompose~\eqref{eq:S1} and~\eqref{eq:S2} at the continuous level. Recall that these transmission conditions are the natural choice to decompose the optimality system~\eqref{eq:S2}, since we maintain the same structure in both subdomains in~\eqref{eq:S2AS2} as in the original optimality system~\eqref{eq:S2}. This reveals once again the importance of choosing properly the transmission condition in the non-overlapping alternating Schwarz in time framework as discussed in~\cite{Gander2025}.

\section{Numerical experiments}\label{sec:4}

To illustrate the convergence behavior of these alternating Schwarz algorithms, we first plot the contraction factors~\eqref{eq:rhoS1AS1} and~\eqref{eq:rhoS2AS1} as functions of eigenvalues. We set $\alpha = 1$, $\nu = 1$, $\kappa = 1$, $T = 1$ and an equal decomposition with $\Gamma = T/2$. Fig.~\ref{fig:illustration} on the top left illustrates two contraction factors~\eqref{eq:rhoS1AS1} and~\eqref{eq:rhoS2AS1} for eigenvalues $d_i\in[10^{-2}, 10^2]$ and different values of $\gamma\in\{1, 10, 10^2\}$. 

\begin{figure}
\centering
\includegraphics[scale = 0.13]{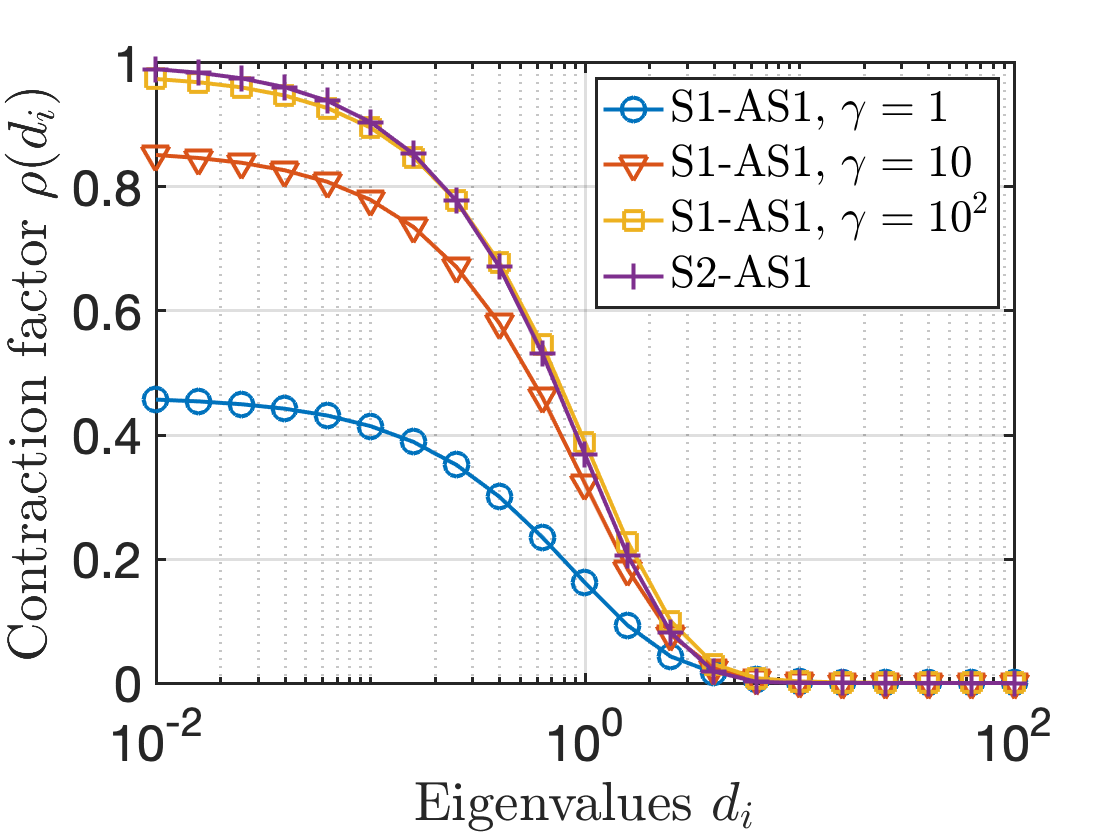}
\includegraphics[scale = 0.13]{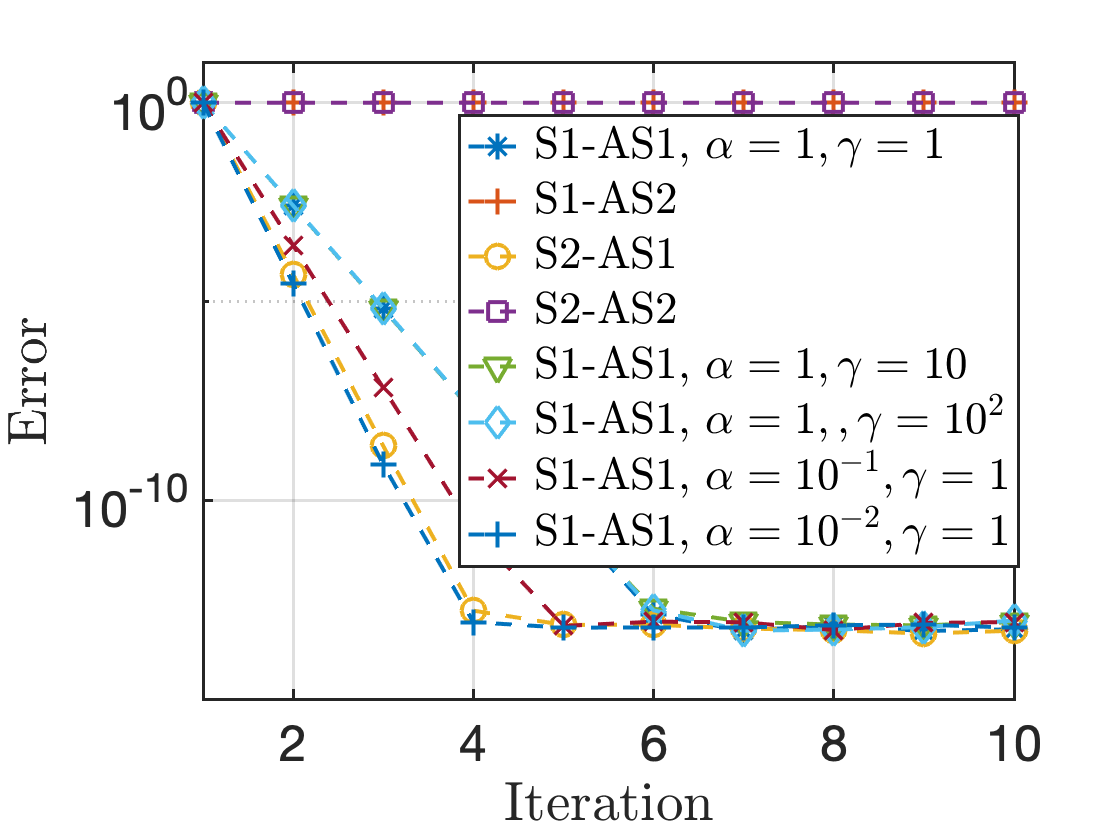}
\includegraphics[scale = 0.13]{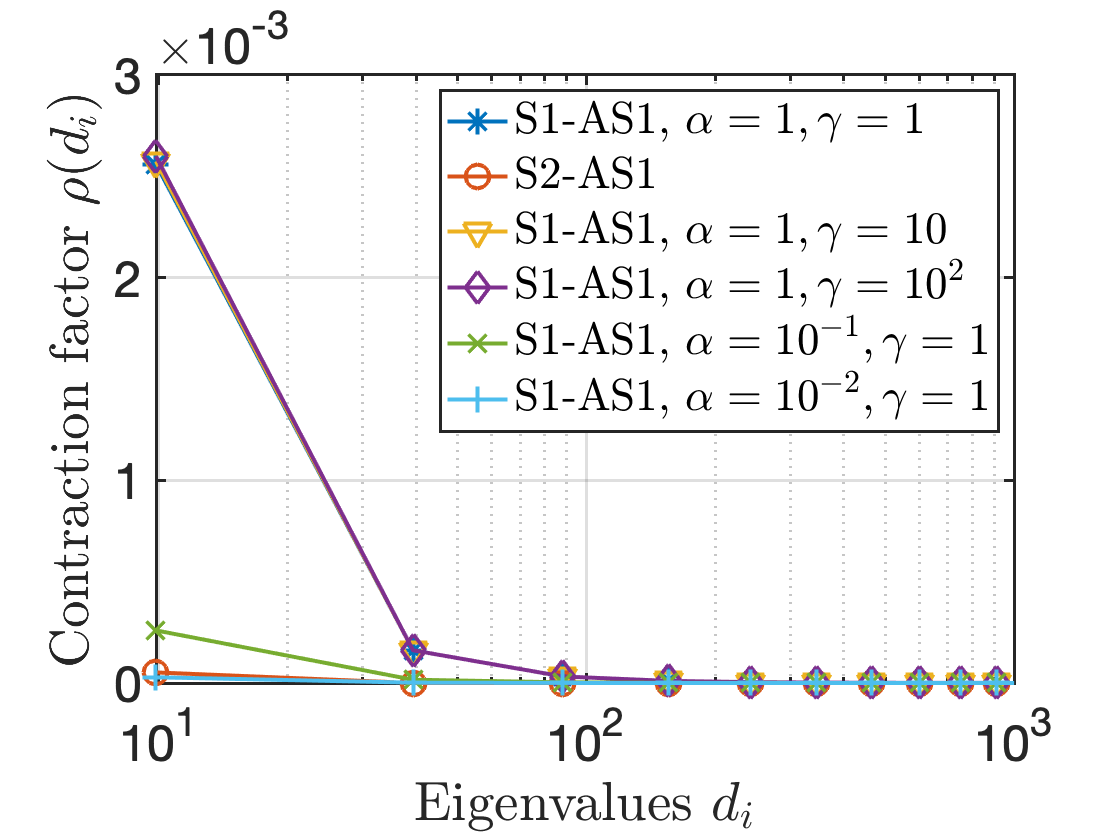}
\caption{Top left: contraction factors~\eqref{eq:rhoS1AS1} and~\eqref{eq:rhoS2AS1} as functions of eigenvalues $d_i\in[10^{-2}, 10^2]$ with different values of $\gamma$. Top right: error decay of four algorithms~\eqref{eq:S1AS1}-\eqref{eq:S1AS2}. Bottom: contraction factors for eigenvalues $d_i\in[10, 10^3]$ with different values of $\gamma$ and $\alpha$}
\label{fig:illustration}
\end{figure}
As expected, increasing the value of $\gamma$ in~\eqref{eq:rhoS1AS1} leads to behavior more similar to that of~\eqref{eq:rhoS2AS1} for all eigenvalues. We also observe a slight difference in the range of $d_i=1$ between two curves $\rho^{\text{AS1}}_{\text{S2}}$ and $\rho^{\text{AS1}}_{\text{S1}}$ with $\gamma = 10^2$, due to the value $\alpha = 1$ in~\eqref{eq:rhoS1AS1}. Taking $\alpha = 0$, the curve of $\rho^{\text{AS1}}_{\text{S1}}|_{\alpha = 0}$ is then always below that of $\rho^{\text{AS1}}_{\text{S2}}$, since the second fraction in~\eqref{eq:rhoS1AS1alpha0} is always smaller than one. More generally, we observe that all convergence factors are monotonically decreasing, and their values are especially small for large eigenvalues. This indicates that both algorithms~\eqref{eq:S1AS1} and~\eqref{eq:S2AS1} are good smoothers, and their convergence behavior for small eigenvalues can be further improved by introducing a relaxation parameter.

Next, we test the error decay of four algorithms~\eqref{eq:S1AS1}-\eqref{eq:S1AS2} in the one-dimensional case. We keep the same setting as in the previous test and consider the length $L=1$, the initial condition $y_0(x) = \frac{1}{2\pi^2}(1-\exp(\pi^2T))\sin(\pi x)$, the target function $\hat y(t, x) = \frac{1}{2\pi^2}(\exp(\pi^2t)-\exp(\pi^2T))\sin(\pi x)$ in~\eqref{eq:S1}, and the final condition $y_T(x) = 0$ in~\eqref{eq:S2}. In particular, this target function is also the solution of the optimality system~\eqref{eq:S2} with $\lambda(t, x) = (\exp(\pi^2t) - \exp(\pi^2T)/2)\sin(\pi x)$. We use the Crank-Nicolson scheme to discretize both systems with mesh size $h_t = h_t = 1/32$. Fig.~\ref{fig:illustration} on the top right illustrates the error decay of four algorithms. As explained in our theoretical analysis, both algorithms~\eqref{eq:S2AS2} and~\eqref{eq:S1AS2} stagnate as $\rho^{\text{AS2}}_{\text{S1}} = \rho^{\text{AS2}}_{\text{S2}}$ = 1. Regarding algorithms AS1, we observe that the algorithm~\eqref{eq:S2AS1} converges faster than~\eqref{eq:S1AS1} in the case $\alpha = 1$ and $\gamma = 1$. However, when increasing the value of $\gamma$, the convergence of~\eqref{eq:S1AS1} stays the same. Indeed, the smallest eigenvalue in this case is approximately $\pi^2$, and Fig.~\ref{fig:illustration} on the bottom further illustrates contraction factors for large eigenvalues. We observe that $\rho^{\text{AS1}}_{\text{S2}}$ is much smaller in this case, and increasing $\gamma$ in $\rho^{\text{AS1}}_{\text{S1}}$ cannot track the behavior of $\rho^{\text{AS1}}_{\text{S2}}$. However, decreasing $\alpha$ in $\rho^{\text{AS1}}_{\text{S1}}$ can better track the convergence behavior of $\rho^{\text{AS1}}_{\text{S2}}$, which has also been testified in Fig.~\ref{fig:illustration} on the top right.

\bibliography{sample}
\bibliographystyle{spmpsci}

\end{document}